\newtheorem{theorem}{Theorem}[section]
\newtheorem{lemma}[theorem]{Lemma}
\newtheorem{proposition}[theorem]{Proposition}
\theoremstyle{definition}
\theoremstyle{remark}
\numberwithin{equation}{section}
\begin{document}

\title{Explicit  coproduct formula\\ for quantum group of the type $G_2$}
\author{V.K. Kharchenko, C. Vay}
\address{Universidad Nacional Aut\'onoma de M\'exico, Facultad de Estudios Superiores 
Cuautitl\'an, Primero de Mayo s/n, Campo 1, CIT, Cuautitlan Izcalli, Edstado de M\'exico, 54768, MEXICO.} 
\address{FAMAF-CIEM (CONICET), Universidad Nacional de C\'ordoba, Medina Allegre s/n, 
Cuidad Universitaria, 5000 C\'ordoba, Rep\'ublica Argentina.}
\email{vlad@unam.mx; vay@famaf.unc.edu.ar}
\thanks{The authors was supported by PAPIIT IN 112913, UNAM, and PACIVE CONS-18, FES-C UNAM, M\'exico. 
This work was carried out in part during the visit of the second author to UNAM. 
He would like to thank the first author and Viqui Lara Sagahon for their worm hospitality and support.}
  
\subjclass{Primary 16W30, 16W35; Secondary 17B37.}

\keywords{Quaum group, Coproduct, PBW-basis.}

\begin{abstract}
We find a coproduct formula in the explicit form for PBW-generators 
of the two-parameter quantum group  $U_q^+(\frak{g})$ where $\frak{g}$ is a simple Lie algebra of type
$G_2.$ The similar formulas for quantizations of simple Lie algebras of infinite series are already known.
\end{abstract}
\maketitle
\markboth{V.K. Kharchenko, C. Vay}{Coproduct formula}

\section{introduction}
In the present paper, we establish an explicit  coproduct formula 
for the quantum group $U_q(\mathfrak{g}),$ where  $\mathfrak{g}$ is a simple Lie algebra of type $G_2.$
The explicit formulas related to the Weyl basis of simple Lie algebras of infinite series appear 
in \cite[Lemma 6.5]{AS}, \cite[Lemma 3.5]{KA}, \cite[Theorem 4.3]{Kh11} and \cite{Kh15}.

The Weyl basis of $\mathfrak{g}$ of the type $G_2$ consists of the polynomials
\begin{equation}
x_1, x_2, [x_1,x_2], \ [[x_1,x_2],x_2], \ [[[x_1,x_2],x_2],x_2],\  [[[[x_1,x_2],x_2],x_2], x_1],
\label{n2}
\end{equation}
that correspond to the positive roots
$$
\alpha_1, \alpha_2, \alpha_1+\alpha_2, \alpha_1+2\alpha_2, \alpha_1+3\alpha_2, 2\alpha_1+3\alpha_2,
$$
see \cite[Chapter VI, \S 4]{Ser} or  \cite[Chapter IV, \S 3, XVII]{Jac}.
These polynomials define some elements of  $U_q^+(\mathfrak{g})$ if the Lie operation is replaced with 
the skew brackets. The coproduct of all of them, except the last one, may be found by the general formula 
proved in Proposition \ref{2coSer}:
$$
\Delta ([x_1x_2^n])=[x_1x_2^n]\otimes 1 +\sum _{k=0}^n\alpha _k^{(n)}g_1g_2^{n-k}\, x_2^k\otimes [x_1x_2^{n-k}], 
$$
where 
$$
\alpha _k^{(n)}=\hbox{\rm \huge [}^n  _k\hbox{\rm \huge ]}_{p_{22}} \cdot \prod_{s=n-k}^{n-1}(1-p_{12}p_{21}p_{22}^s),
$$
and  $p_{ij}$ are the quatization parameters that define the commutation rules beetwen the group-like elements 
and the generators: $x_ig_j=p_{ij}g_jx_i.$
This formula takes more elegant form if we consider proportional elements
$$
\{x^n\}={x^n\over [n]_q!}, \ \ \ \{xy^n\}={[\ldots [[x,y],y], \ldots , y]\over 
[n]_q! \cdot \prod_{s=0}^{n-1}(1-p_{12}p_{21}p_{22}^s)}.
$$

For these elements, the coproduct obeys  the following form:
$$
\Delta (\{x_2^n\} )=\sum _{k=0}^n g_2^{n-k}\, \{x_2^{k}\} \otimes \{x_2^{n-k}\} , 
\label{Ser3}
$$
$$
\Delta (\{x_1x_2^n\} )=\{x_1x_2^n\} \otimes 1 +\sum _{k=0}^n g_1g_2^{n-k}\, \{x_2^k\}\otimes \{x_1x_2^{n-k}\} . 
$$
Clearly, the above formulas make sense only if $[k]_q\neq 0,$ $p_{12}p_{21}\neq q^{1-k},$ $q=p_{22}$ for all $k,$
$1\leq k\leq n$ because otherwise the elements  $\{x_2^n\} $ and $\{x_1x_2^n\}$ are undefined.
A similar formula is valid for the elements with opposite alignment of brackets  
$$
\Delta (\{x_2^nx_1\} )=\{x_2^nx_1\} \otimes 1 +\sum _{k=0}^n g_2^k\,  \{x_2^{n-k}x_1\} \otimes  \{x_2^k\} ,
$$
where by definition
$$
\{x_2^nx_1\}={[x_2,[x_2,\ldots [x_2,x_1]\ldots ]] 
\over [n]_q! \cdot \prod_{s=0}^{n-1}(1-p_{12}p_{21}p_{22}^s)}.
$$
The coproduct of the remaining in (\ref{n2}) element has no elegant form. 
Nevertheless, if we replace it  with   
$$
\{ x_1x_2^3x_1\} \stackrel{df}{=} p_{21}^2{q^3+q^2\over 1-q^3}\{ x_1x_2\} \{ x_1x_2^2\}
+p_{21}{[4]_q-2\over 1-q^3}\{ x_1x_2^2\} \{ x_1x_2\}+\{ x_1x_2^3\}x_1,
$$
then the ordered set 
$$
x_2<\{ x_1x_2^3\}<\{ x_1x_2^2\} < \{ x_1x_2^3x_1\}<\{ x_1x_2\}<x_1
$$
forms a set of PBW generators of the algebra $U_q^+({\frak g})$ over $G$ (Prorosition \ref{tion}),
and a harmonic formula is valid  (Theorem \ref{c5}):
$$
\Delta (\{ x_1x_2^3x_1\})=\{ x_1x_2^3x_1\} \otimes 1 +g_1^2g_2^3\otimes \{ x_1x_2^3x_1\} 
+\sum _{k=0}^3 g_1g_2^k\,  \{x_2^{3-k}x_1\} \otimes  \{x_1x_2^k\} .
$$
Of course, the set of PBW generators for  $U_q(\mathfrak{g})$ is the union of that sets for
positive and negative quantum Borel subalgebras. By this reason we 
consider only the positive quantum Borel subalgebra $U_q^+(\mathfrak{g}).$

\section{Coproduct of Serre polynomials}
Let $X=$ $\{ x_1, x_2,\ldots, x_n\} $ be a set of quantum variables; that is, 
associated with each $x_i$ are an element $g_i$ of a fixed Abelian group 
$G$ and a character $\chi ^i:G\rightarrow {\bf k}^*.$ 
For every word $w$ in $X$ let $g_w$ or gr$(w)$ denote
an element of $G$ that appears from $w$ by replacing each $x_i$ with $g_i.$
In the same way  $\chi ^w$ denotes a character that appears from $w$ by replacing each $x_i$ with $\chi ^i.$

Let $G\langle X\rangle $ denote the skew group algebra generated by $G$
and {\bf k}$\langle X\rangle $ with the commutation rules $x_ig=\chi ^i(g)gx_i,$
or equivalently $wg=\chi ^w(g)gw,$ where $w$ is an arbitrary word in $X.$
If $u,$ $v$ are homogeneous polynomials in  each $x_i,$ $1\leq i\leq n$,
then the skew brackets are defined by the formula 
\begin{equation}
[u,v]=uv-\chi ^u(g_v) vu.
\label{sqo}
\end{equation}
We use the notation  $\chi ^u(g_v)=p_{uv}=p(u,v).$
The form $p(\hbox{-},\hbox{-})$ is bimultiplicative: 
\begin{equation}
p(u, vt)=p(u,v)p(u,t), \ \ p(ut,v)=p(u,v)p(t,v).
\label{sqot}
\end{equation}
In particular $p(\hbox{-},\hbox{-})$ is completely defined by $n^2$ parameters $p_{ij}=\chi ^{i}(g_{j}).$ 

The algebra $G\langle X\rangle $ has a Hopf algebra structure given by the comultiplications on the generators:
$$
\Delta (x_i)=x_i\otimes 1+g_i\otimes x_i, \ 1\leq i\leq n, \ \ \Delta (g)=g\otimes g, \ g\in G.
$$
The so-called $q$-Serre (noncommutative) polynomials, 
$$[\ldots [x_i,\underbrace{x_j],x_j],\ldots ,x_j]}_m\stackrel{df}{=}[x_ix_j^m], \ \ 1\leq i\neq j\leq n,$$
and
$$[\underbrace{x_j,[x_j,\ldots [x_j}_m,x_i]\ldots ]]\stackrel{df}{=}[x_j^mx_i], \ \ 1\leq i\neq j\leq n,$$
are important as the defining relations of the quantizations $U_q^+({\mathfrak g}).$ 
To find the coproduct of that polynomials, we recall the notations of the $q$-combinatoric. 

If $q$ is a fixed parameter, then $[n]_q=1+q+q^2+\cdots +q^{n-1}$ and
$[n]_q!=\prod _{k=1}^n [k]_q.$ The Gauss polynomials are defined as $q$-binomial coefficients,
\begin{equation}
\hbox{\huge [}^n  _k\hbox{\huge ]}_{q}={[n]_q!\over [k]_q!\cdot [n-k]_q!} \, ,
\label{Gau1}
\end{equation} 
that satisfy two $q$-Pascal identities 
\begin{equation}
\hbox{\huge [}^{n+1}  _k\hbox{\huge ]}_{q}=\hbox{\huge [}^n  _{k-1}\hbox{\huge ]}_{q}
+q^k\cdot \hbox{\huge [}^n  _k\hbox{\huge ]}_{q} , \ \ \ \ \ \
\hbox{\huge [}^{n+1}  _k\hbox{\huge ]}_{q}
= \hbox{\huge [}^n  _{k-1}\hbox{\huge ]}_{q}
\cdot q^{n-k+1} +
\hbox{\huge [}^n  _k\hbox{\huge ]}_{q}.
\label{0(Pas1)}
\end{equation}
If $x$ and $y$ are variables subject to the relation $yx=qxy,$ then $q$-Neuton binomial formula is valid
\begin{equation}
(x+y)^n=\sum _{k=0}^n \hbox{\huge [}^n  _k\hbox{\huge ]}_{q}  x^{n-k}y^{k}. 
\label{Neu}
\end{equation}
For example, if we put $x=g_2\otimes x_2,$ $y=x_2\otimes 1,$
then 
$$
yx=x_2g_2\otimes x_2=p_{22}g_2x_2\otimes x_2=qxy, \ \ \ q=p_{22},
$$
and the $q$-Neuton binomial formula implies
\begin{equation}
\Delta (x_2^n)=(x+y)^n=\sum _{k=0}^n \hbox{\huge [}^n  _k\hbox{\huge ]}_{q}g_2^{n-k}\, x_2^{k}\otimes x_2^{n-k}.
\label{mon}
\end{equation}

\begin{proposition} The following explicit coproduct formula is valid:
\begin{equation}
\Delta ([x_1x_2^n])=[x_1x_2^n]\otimes 1 +\sum _{k=0}^n\alpha _k^{(n)}g_1g_2^{n-k}\, x_2^k\otimes [x_1x_2^{n-k}], 
\label{2coSer}
\end{equation}
where 
\begin{equation}
\alpha _k^{(n)}=\hbox{\rm \huge [}^n  _k\hbox{\rm \huge ]}_{p_{22}} \cdot \prod_{s=n-k}^{n-1}(1-p_{12}p_{21}p_{22}^s).
\label{2recko}
\end{equation}
\end{proposition}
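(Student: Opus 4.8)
The plan is to induct on $n$, using that $\Delta$ is an algebra homomorphism of $G\langle X\rangle$ together with the recursive definition of the $q$-Serre polynomial. For the base case $n=0$ we have $[x_1x_2^0]=x_1$, and the asserted formula reduces to $\Delta(x_1)=x_1\otimes 1+g_1\otimes x_1$, since $\alpha_0^{(0)}=\binom{0}{0}_{p_{22}}=1$ (the product in (\ref{2recko}) being empty). For the inductive step I would start from the left-nested identity $[x_1x_2^n]=[[x_1x_2^{n-1}],x_2]=[x_1x_2^{n-1}]\,x_2-p_{12}p_{22}^{\,n-1}\,x_2\,[x_1x_2^{n-1}]$, where the scalar is $\chi^{[x_1x_2^{n-1}]}(g_2)=\chi^1(g_2)\,\chi^2(g_2)^{n-1}=p_{12}p_{22}^{\,n-1}$ because $[x_1x_2^{n-1}]$ has degree $x_1x_2^{n-1}$.

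Next I would apply $\Delta$ to this identity and substitute the inductive expression for $\Delta([x_1x_2^{n-1}])$ and $\Delta(x_2)=x_2\otimes 1+g_2\otimes x_2$, expanding the two resulting products in $G\langle X\rangle\otimes G\langle X\rangle$. The only non-routine point in the expansion is to move the group-like factors through the variables by the rule $w g=\chi^w(g)\,g w$ inside each tensor leg; this produces the scalars $p_{21},p_{22}$ that eventually assemble the coefficients. Three cancellations organize the result: the terms carrying $\otimes 1$ recombine, via the same bracket identity, into $[x_1x_2^n]\otimes 1$; the two terms of the form $g_2[x_1x_2^{n-1}]\otimes x_2$ coming from the two products cancel against each other; and, crucially, every product $[x_1x_2^{n-1-k}]\,x_2$ appearing in the second tensor leg must be rewritten as $[x_1x_2^{n-k}]+p_{12}p_{22}^{\,n-1-k}\,x_2\,[x_1x_2^{n-1-k}]$, after which its correction term cancels exactly the term with $x_2\,[x_1x_2^{n-1-k}]$ produced by the other product. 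I expect this recombination to be the main obstacle: it is the step where the bracket structure, and not merely associativity, is used, and it is easy to mismatch the accompanying powers of $p_{22}$.

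What survives is a sum of the required shape, and comparing the coefficient of $g_1g_2^{\,n-m}x_2^m\otimes[x_1x_2^{n-m}]$ on both sides yields the recursion $\alpha_m^{(n)}=p_{22}^{\,m}\,\alpha_m^{(n-1)}+\bigl(1-p_{12}p_{21}p_{22}^{\,2n-1-m}\bigr)\,\alpha_{m-1}^{(n-1)}$. It then remains to check that the closed form (\ref{2recko}) satisfies this recursion. Writing $c=p_{12}p_{21}$ and $q=p_{22}$ and factoring out the common product $\prod_{s=n-m}^{n-2}(1-cq^s)$, the verification splits into a $c$-free part, handled by the first $q$-Pascal identity in (\ref{0(Pas1)}), and a part linear in $c$, handled by the second $q$-Pascal identity; together they collapse the bracket to $(1-cq^{\,n-1})\binom{n}{m}_q$, which is exactly $\alpha_m^{(n)}$. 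The use of both Pascal identities in tandem is what reconciles the apparently too-large exponent $2n-1-m$ in the recursion with the range $s\le n-1$ of the product in (\ref{2recko}).
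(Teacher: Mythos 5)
Your proposal is correct and takes essentially the same route as the paper's own proof: induction via the bracket identity $[x_1x_2^{n}]=[x_1x_2^{n-1}]x_2-p_{12}p_{22}^{n-1}x_2[x_1x_2^{n-1}]$, the same three cancellations after moving group-like factors, the same coefficient recursion (the paper's (\ref{2rec}) up to the index shift $n\leftarrow n+1$), and the same verification of the closed form by splitting the resulting identity into its constant and linear parts in $p_{12}p_{21}$ and invoking the two $q$-Pascal identities (\ref{0(Pas1)}).
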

\begin{proof}
We shall use induction on $n.$
If $n=0,$ then the equality reduces to 
$\Delta (x_1)=x_1\otimes 1+g_1\otimes x_1,$ whereas $\alpha _0^{(0)}=1.$
Moreover, it is clear that $\alpha _0^{(n)}=1$ for all $n.$ We have,
\begin{equation}
\Delta ([x_1x_2^n])\cdot (x_2\otimes 1)=[x_1x_2^n]x_2\otimes 1
+\sum _{k=0}^n\alpha _k^{(n)}g_1g_2^{n-k}\, x_2^{k+1}\otimes [x_1x_2^{n-k}], 
\label{ar1}
\end{equation}
\begin{equation}
\Delta ([x_1x_2^n])\cdot (g_2\otimes x_2)=[x_1x_2^n]\, g_2\otimes x_2+
\sum _{k=0}^n\alpha _k^{(n)}g_1g_2^{n-k}\, x_2^{k}\, g_2\otimes [x_1x_2^{n-k}]x_2,
\label{ar2}
\end{equation}
\begin{equation} (x_2\otimes 1)\cdot \Delta ([x_1x_2^n])=x_2[x_1x_2^n]\otimes 1+
\sum _{k=0}^n\alpha _k^{(n)}x_2\, g_1g_2^{n-k}\, x_2^{k}\otimes [x_1x_2^{n-k}], 
\label{ar3}
\end{equation}
\begin{equation}
(g_2\otimes x_2)\cdot \Delta ([x_1x_2^n])=g_2[x_1x_2^n]\otimes x_2+
\sum _{k=0}^n\alpha _k^{(n)}g_1g_2^{n-k+1}\, x_2^{k}\otimes x_2[x_1x_2^{n-k}].
\label{ar4}
\end{equation}
In the second and third relations we may move the group-like factors
to the left: 
$$
[x_1x_2^n]g_2=p_{12}p_{22}^n\, g_2[x_1x_2^n],\ \  x_2^k\, g_2=p_{22}^k\, g_2\, x_2^{k}, \ \
x_2\, g_1g_2^{n-k}\, x_2^{k}=p_{21}p_{22}^{n-k} \, g_1g_2^{n-k+1}\, x_2^{k+1}.
$$

Using all that relations, we develop the coproduct  of
$$[x_1x_2^{n+1}]=[x_1x_2^n]x_2-p_{12}p_{22}^n\, x_2[x_1x_2^n]$$
taking into account that $\Delta (x_2)=x_2\otimes 1+g_2\otimes x_2.$
The sums of (\ref{ar1}) and (\ref{ar3}) provide the tensors
$$
[x_1x_2^{n+1}]\otimes 1+\sum _{k=0}^n\alpha _k^{(n)}(1-p_{12}p_{21}p_{22}^{2n-k})g_1g_2^{n-k}\, x_2^{k+1}\otimes [x_1x_2^{n-k}] ,
$$
whereas the sums of  (\ref{ar2}) and (\ref{ar4}) produce the following ones:
$$
\sum _{k=0}^n\alpha _k^{(n)}p_{22}^k\, g_1g_2^{n-k+1}\, x_2^{k}\otimes [x_1x_2^{n-k+1}].
$$
The first term of (\ref{ar2}) cancels with the first term of (\ref{ar4}). Finally, we arrive to the formula (\ref{2coSer})
with $n\leftarrow n+1$ and coefficients 
\begin{equation}
\alpha _k^{(n+1)}=\alpha _{k-1}^{(n)} \, (1-p_{12}p_{21}p_{22}^{2n-k+1})
+\alpha _{k}^{(n)}\, p_{22}^k, \ \ k\geq 1, \ \ \alpha _0^{(n+1)}=1.
\label{2rec}
\end{equation}
To prove the coproduct formula (\ref{2coSer}), it remains to check that values (\ref{2recko})
satisfy the above recurrence relations. 

To this end, we shall check the equality of the following
two polynomials in commutative variables  $\lambda , q:$
\begin{equation}
\hbox{\huge [}^{n+1}  _k\hbox{\huge ]}_{q} 
\cdot (1-\lambda q^n)=
\hbox{\huge [}^n  _{k-1}\hbox{\huge ]}_{q}
\cdot (1-\lambda q^{2n-k+1})+
\hbox{\huge [}^n  _k\hbox{\huge ]}_{q}
\cdot (1-\lambda q^{n-k})\cdot q^k.
\label{2pol}
\end{equation}
If $\lambda =0,$ then the equality reduces to the first $q$-Pascal identity (\ref{0(Pas1)}).
Let us compare the coefficients at $\lambda ,$ 
$$
\hbox{\huge [}^{n+1}  _k\hbox{\huge ]}_{q}
\cdot q^n=
\hbox{\huge [}^n  _{k-1}\hbox{\huge ]}_{q}
 \cdot q^{2n-k+1}+
\hbox{\huge [}^n  _k\hbox{\huge ]}_{q} \cdot q^{n-k}\cdot q^k.
$$
This equality differs from the second $q$-Pascal identity (\ref{0(Pas1)})
just by a common factor $q^n.$ Hence, the equality (\ref{2pol}) is valid.

If we multiply both sides of (\ref{2pol})
by $\prod _{s=n-k+1}^{n-1}(1-\lambda q^s)$ and next replace the variables
 $q\leftarrow p_{22},$ $\lambda \leftarrow p_{12}p_{21},$  then we obtain precisely (\ref{2rec})
for values (\ref{2recko}).  
\end{proof}

One may illiminate all coeficients in these coproduct formulas replasing the elements 
by some scalar multiples of them. Let us put
\begin{equation}
\{x^n\}={x^n\over [n]_q!}, \ \ \ \{xy^n\}={[xy^n]\over [n]_q! \cdot \prod_{s=0}^{n-1}(1-p_{12}p_{21}p_{22}^s)}.
\label{ska1}
\end{equation}
Then the coproduct formulas obey more elegant form:
\begin{equation}
\Delta (\{x_2^n\} )=\sum _{k=0}^n g_2^{n-k}\, \{x_2^{k}\} \otimes \{x_2^{n-k}\} ,
\label{Ser3}
\end{equation}
\begin{equation}
\Delta (\{x_1x_2^n\} )=\{x_1x_2^n\} \otimes 1 +\sum _{k=0}^n g_1g_2^{n-k}\, \{x_2^k\}\otimes \{x_1x_2^{n-k}\} . 
\label{mon1} 
\end{equation}
Of course, the above formulas make sense only if $[k]_q\neq 0,$ $p_{12}p_{21}\neq q^{1-k},$ $q=p_{22}$ for all $k,$
$1\leq k\leq n$ because otherwise the elements  $\{x_2^n\} $ and $\{x_1x_2^n\}$ are undefined.

In perfect analogy one may develop a coproduct formula for the elements 
$$
[\underbrace{x_2,[x_2,\ldots [x_2}_n,x_1]\ldots ]]\stackrel{df}{=}[x_2^nx_1]:
$$
\begin{equation}
\Delta ([x_2^nx_1])=[x_2^nx_1]\otimes 1 +\sum _{k=0}^n\alpha _k^{(n)}g_2^k\, 
[x_2^{n-k}x_1]\otimes x_2^k.
\label{2coSer4} 
\end{equation}
If we define 
$$
\{x_2^nx_1\}={[x_2^nx_1]\over [n]_q! \cdot \prod_{s=0}^{n-1}(1-p_{12}p_{21}p_{22}^s)},
$$
then all coefficients desappear:
\begin{equation}
\Delta (\{x_2^nx_1\} )=\{x_2^nx_1\} \otimes 1 +\sum _{k=0}^n g_2^k\,  \{x_2^{n-k}x_1\} \otimes  \{x_2^k\} . 
\label{mon2}
\end{equation}
Below we show an alternative way how to develop these coproduct formulas using the shuffle representation.

\section{Shuffle representation}
The tensor space $T(W)$ of the linear space $W$ spand by 
the set of quantum variables $X=\{x_1, x_2,\ldots ,x_n$\}
has a structure of a braided Hopf algebra $Sh(W)$ with a braiding $\tau (u\otimes v)=p(v,u)^{-1}v\otimes u.$ 
We shall denote  the tensors $z_1\otimes z_2\otimes \ldots \otimes z_m,$ $z_i\in X$ considered as elements 
of $Sh(W)$  by $(z_1z_2 \cdots z_n)$ and call them {\it comonomials}. 
By definition the product on $T(W)$ is the so called {\it shuffle product}:
\begin{equation}
(u)(v)=\sum_{\substack{u=u_1\cdots u_\ell,\,v=v_1\cdots v_\ell\\1\leq\ell\leq\max\{\deg(u),\deg(v)\}}}\,
(\prod_{i<j} p_{v_iu_j}^{-1})\, (u_1v_1\cdots u_\ell v_\ell),
\label{prow}
\end{equation}
whereas the coproduct  $\Delta^b: Sh(W)\rightarrow Sh(W)\underline{\otimes } Sh(W)$ is defined by 
\begin{equation}
\Delta^b((u))=\sum_{u=u_1u_2}(u_1)\otimes(u_2).
\label{bcopro}
\end{equation}
The formula of the shuffle product is easier when one of the comonomials has the length one:
\begin{equation}
(w)(x_i)=\sum _{uv=w}p(x_i,v)^{-1}\cdot (ux_iv), \ \ (x_i)(w)=\sum _{uv=w}p(u,x_i)^{-1}\cdot (ux_iv).
\label{spro}
\end{equation}
From this equality, we deduce that
\begin{equation}
[(w), (x_i)]=\sum _{uv=w}(p(x_i,v)^{-1}-p(v,x_i))\cdot (ux_iv),
\label{proc}
\end{equation}
and
\begin{equation}
[(x_i),(w)]=\sum _{uv=w}(p(u,x_i)^{-1}-p(x_i,u))\cdot (ux_iv).
\label{proc1}
\end{equation}

The free algebra ${\bf k}\langle X\rangle $ considered as a subalgebra 
of $G\langle X\rangle $  becomes  a braided Hopf algebra if we define a braided coproduct $\Delta ^b$ as follows:
\begin{equation}
\Delta ^b(u)=\sum _{(u)}u^{(1)}\hbox{gr}(u^{(2)})^{-1}\underline{\otimes} u^{(2)}, \hbox{ where }\  
\Delta (u)=\sum _{(u)}u^{(1)}\otimes u^{(2)}.
\label{copro}
\end{equation}

The map $\Omega: x_i\rightarrow (x_i)$ defines a homomorphism of the braided Hopf algebra
${\bf k}\langle X\rangle $ into the braided Hopf algebra $Sh(W).$ 
If $p_{ij}$ are algebraically independent parameters, then  $\Omega $ is an isomorphism. Otherwise
the  kernel of $\Omega $ is  the largest Hopf ideal in ${\bf k}\langle X\rangle ^{(2)},$
where ${\bf k}\langle X\rangle ^{(2)}$ is the ideal of ${\bf k}\langle X\rangle ,$
generated by $x_ix_j,$ $1\leq i,j\leq n.$ The image of $\Omega $ is the so-called Nichols algebra 
of the braided space $W.$ See details in P. Schauenberg  \cite{Sch}, M. Rosso \cite{Ros},
M. Takeuchi \cite{Tak1},  D. Flores de Chela and J.A. Green  \cite{FC}, 
N. Andruskiewitsch, H.-J. Schneider  \cite{AS}.

\smallskip

The homomorphism $\Omega $  is 
extremely useful for calculating the coproduct due to  (\ref{copro}) and (\ref{bcopro}).
In this way, we may find alternative proof of the coproduct formulas (\ref{mon1}) and (\ref{mon2}).

\begin{lemma}
If $[n]_q!\neq 0,$ then
$$\Omega (\{x_2^n\})=q^{n(1-n)\over 2}(x_2^n), \ \ \  q=p_{22}.$$
Otherwise $\Omega (x_2^n)=0.$
\label{exm}
\end{lemma}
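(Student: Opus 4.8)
The plan is to exploit that $\Omega$ is an algebra homomorphism, so that $\Omega(x_2^n)$ equals the $n$-fold shuffle product $(x_2)(x_2)\cdots(x_2)$ computed in $Sh(W)$. Since every letter involved is $x_2$, the only comonomial of degree $n$ that can occur is $(x_2^n)$, whence $\Omega(x_2^n)=c_n\,(x_2^n)$ for a scalar $c_n$ depending only on $q=p_{22}$. The task then reduces to computing $c_n$ explicitly and dividing by $[n]_q!$ when the latter is nonzero; the vanishing clause will drop out because the factor $[n]_q!$ turns out to divide $c_n$.

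First I would set up an induction on $n$ with $c_0=c_1=1$. For the inductive step I would write $(x_2)^n=\big((x_2)^{n-1}\big)(x_2)=c_{n-1}\,(x_2^{n-1})(x_2)$ and apply the length-one shuffle formula (\ref{spro}), namely $(w)(x_i)=\sum_{uv=w}p(x_i,v)^{-1}(ux_iv)$. Taking $w=x_2^{n-1}$ and splitting $w=x_2^{j}x_2^{n-1-j}$ for $0\le j\le n-1$, each summand yields the single comonomial $(x_2^n)$ with coefficient $p(x_2,x_2^{n-1-j})^{-1}=q^{-(n-1-j)}$. Summing over $j$ gives $\sum_{i=0}^{n-1}q^{-i}=[n]_{q^{-1}}$, and the identity $[n]_{q^{-1}}=q^{-(n-1)}[n]_q$ turns this into the recurrence $c_n=q^{-(n-1)}[n]_q\,c_{n-1}$.

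Solving the recurrence telescopes to $c_n=q^{-\sum_{k=1}^{n}(k-1)}[n]_q!=q^{-n(n-1)/2}[n]_q!$. Since $-n(n-1)/2=n(1-n)/2$, dividing $\Omega(x_2^n)=c_n(x_2^n)$ by $[n]_q!$ gives $\Omega(\{x_2^n\})=q^{n(1-n)/2}(x_2^n)$ whenever $[n]_q!\neq 0$, which is the first assertion. When $[n]_q!=0$, the explicit factor $[n]_q!$ inside $c_n$ forces $\Omega(x_2^n)=c_n(x_2^n)=0$, which is the remaining assertion.

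The computation is routine; the only points that demand care are the \emph{direction} of the braiding encoded in the shuffle product --- one must read off $p(x_i,v)^{-1}$ with the correct argument order from (\ref{spro}) and not its inverse --- and the bookkeeping of the exponent of $q$, in particular verifying the normalization $[n]_{q^{-1}}=q^{-(n-1)}[n]_q$ that converts the $q^{-1}$-integer produced by the shuffle into the $q$-integer appearing in $[n]_q!$. These are the places where a stray exponent would propagate through the telescoping product, so I would confirm them against the cases $n=1,2$ before writing the induction out in full.
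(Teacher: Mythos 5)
Your proof is correct and follows essentially the same route as the paper: induction on $n$ using the length-one shuffle formula (\ref{spro}) to obtain $\Omega(x_2^n)=[n]_{q^{-1}}!\,(x_2^n)$, then converting via $[k]_{q^{-1}}=q^{-(k-1)}[k]_q$. The exponent bookkeeping and the vanishing clause are handled exactly as in the paper's argument.
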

\begin{proof}
By induction on $n$ we may prove the explicit formula 
\begin{equation}
\Omega (x_2^n)=[n]_{q^{-1}}!(x_2^n).
\label{mo2}
\end{equation}
 Indeed, 
if $n=1,$ then there is nothing to prove. Using (\ref{spro}), we have
$$
\Omega (x_2^{n+1})=[n]_{q^{-1}}! (x_2^n)(x_2)=[n]_{q^{-1}}!(1+q^{-1}+\cdots + q^{-n}) (x_2^{n+1})=[n+1]_{q^{-1}}!(x_2^{n+1}).
$$
Because $[k]_{q^{-1}}=q^{-(k-1)}\cdot [k]_q,$ and $-1-2-\cdots -n={(n+1)(1-(n+1))\over 2}$, the lemma  is proven.
\end{proof}
\begin{lemma}
If $[k]_q\neq 0,$ $p_{12}p_{21}\neq q^{1-k},$ $q=p_{22},$ $1\leq k\leq n,$ then
$$\Omega (\{x_1x_2^n\})=p_{21}^{-n}q^{n(1-n)\over 2}(x_2^nx_1),$$
and
$$\Omega (\{x_2^nx_1\})=p_{12}^{-n}q^{n(1-n)\over 2}(x_1x_2^n).$$
Otherwise $\Omega ([x_1x_2^n])=\Omega ([x_2^nx_1])=0.$
\label{kmm}
\end{lemma}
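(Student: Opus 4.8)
The plan is to use that $\Omega$ is an algebra homomorphism sending $x_i$ to the comonomial $(x_i)$ of the same $G$-degree $g_i$ and character $\chi^i$; hence it intertwines the skew brackets, $\Omega([u,v])=\Omega(u)\Omega(v)-p(u,v)\Omega(v)\Omega(u)$, where the right-hand side is the $p$-commutator computed with the shuffle product of $Sh(W)$. Applying this repeatedly to $[x_1x_2^n]=[\,\cdots[[x_1,x_2],x_2],\dots,x_2\,]$ reduces the whole computation to the Nichols algebra:
\[
\Omega([x_1x_2^n])=[\,\cdots[[(x_1),(x_2)],(x_2)],\dots,(x_2)\,],
\]
with $n$ copies of $(x_2)$, which I can evaluate with the one-line bracket formula (\ref{proc}).

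First I would show by induction on $n$ that $\Omega([x_1x_2^n])=c_n\,(x_2^nx_1)$ for a scalar $c_n$; the key point is that the image stays inside the line spanned by the reversed comonomial. For the inductive step I apply (\ref{proc}) with $w=x_2^nx_1$ and $x_i=x_2$. Since $w$ ends in $x_1$, its only suffixes are the empty word and $x_2^jx_1$ for $0\le j\le n$; the empty suffix contributes $0$, and every remaining factorization produces the single comonomial $(x_2^{n+1}x_1)$ with coefficient $p(x_2,x_2^jx_1)^{-1}-p(x_2^jx_1,x_2)=q^{-j}p_{21}^{-1}-q^jp_{12}$ by bimultiplicativity of $p$ and $q=p_{22}$. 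Summing the two geometric series yields the recurrence
\[
c_{n+1}=c_n\bigl(p_{21}^{-1}[n+1]_{q^{-1}}-p_{12}[n+1]_q\bigr),\qquad c_0=1.
\]

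Next I would solve this recurrence. Using $[n+1]_{q^{-1}}=q^{-n}[n+1]_q$ each factor rewrites as $[n+1]_q\,p_{21}^{-1}q^{-n}(1-p_{12}p_{21}q^n)$, so that the product telescopes to
\[
c_n=p_{21}^{-n}\,q^{\frac{n(1-n)}{2}}\,[n]_q!\,\prod_{s=0}^{n-1}(1-p_{12}p_{21}q^s).
\]
This identity holds unconditionally and already disposes of the \textbf{otherwise} clause: $c_n$ vanishes exactly when some factor does, that is, when $[k]_q=0$ or $p_{12}p_{21}=q^{1-k}$ for some $1\le k\le n$ --- the negation of the hypothesis --- in which case $\Omega([x_1x_2^n])=0$. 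Under the hypothesis the normalizing scalar in (\ref{ska1}) is precisely $[n]_q!\prod_{s=0}^{n-1}(1-p_{12}p_{21}q^s)$, and dividing cancels it against the same factor of $c_n$, leaving $\Omega(\{x_1x_2^n\})=p_{21}^{-n}q^{n(1-n)/2}(x_2^nx_1)$.

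Finally, the formula for $\Omega(\{x_2^nx_1\})$ follows by the mirror argument, replacing the right-handed bracket formula (\ref{proc}) by the left-handed one (\ref{proc1}); this interchanges the roles of $p_{12}$ and $p_{21}$ and makes the comonomial grow to $(x_1x_2^{n+1})$, giving the stated expression. The main obstacle is not conceptual but bookkeeping: verifying that all suffix (respectively prefix) contributions collapse onto a single comonomial and that the resulting product telescopes into the claimed closed form. Once the homomorphism property of $\Omega$ and the shuffle bracket formulas (\ref{proc}), (\ref{proc1}) are in place, the rest is a routine $q$-binomial computation.
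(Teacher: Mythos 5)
Your proof is correct and follows essentially the same route as the paper: induction on $n$ via the shuffle bracket formulas (\ref{proc}) and (\ref{proc1}), collapsing all factorizations onto the single comonomial $(x_2^{n+1}x_1)$ (resp. $(x_1x_2^{n+1})$), and telescoping the resulting product using $[m]_{q^{-1}}=q^{1-m}[m]_q$ to get the closed form, from which both the normalized formula and the vanishing clause follow. (Your factor $p_{21}^{-1}[n+1]_{q^{-1}}-p_{12}[n+1]_q$ is the correct count of the $n+1$ nonempty suffixes; the paper's displayed $[n]_{q^{-1}}$ at that step is an off-by-one typo that its final formula silently corrects.)
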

\begin{proof}
By induction on $n$ we shall prove the explicit formulas
\begin{equation}
\Omega ([x_1x_2^n])=[n]_{q^{-1}}!\cdot p_{21}^{-n}\prod_{s=0}^{n-1} (1-p_{12}p_{21}q^{s})\cdot (x_2^nx_1),
\label{bic1}
\end{equation}
and 
\begin{equation}
\Omega ([x_2^nx_1])=[n]_{q^{-1}}!\cdot  p_{12}^{-n}\prod_{s=0}^{n-1} (1-p_{12}p_{21}q^{s})\cdot (x_1x_2^n).
\label{bic2}
\end{equation}
If $n=0,$ the equalities are evident. Using (\ref{proc}), we have
$$
[(x_2^nx_1),(x_2)]=\sum _{uv=x_2^nx_1}(p(x_2,v)^{-1}-p(v,x_2))\cdot (ux_2v)
$$
$$
=\sum_{k=0}^n(p_{21}^{-1}q^{-k}-p_{12}q^{k})\cdot (x_2^{n+1}x_1)=
(p_{21}^{-1} [n]_{q^{-1}}-p_{12} [n]_{q})\cdot (x_2^{n+1}x_1)
$$
$$
=(1-p_{12}p_{21}q^n)p_{21}^{-1} [n]_{q^{-1}} \cdot (x_2^{n+1}x_1),
$$
which completes the induction step. Similarly, using (\ref{proc1}), we have
$$
[(x_2),(x_1x_2^n)]=\sum _{uv=x_1x_2^n}(p(u,x_2)^{-1}-p(x_2,u))\cdot (ux_2v)
$$
$$
=\sum_{k=0}^n(p_{12}^{-1}q^{-k}-p_{21}q^{k})\cdot (x_1x_2^{n+1})=
(p_{12}^{-1} [n]_{q^{-1}}-p_{21} [n]_{q})\cdot (x_1x_2^{n+1})
$$
$$
=(1-p_{12}p_{21}q^n)p_{12}^{-1} [n]_{q^{-1}} \cdot (x_1x_2^{n+1}),
$$
which proves the second equality. Since  $[n]_{q^{-1}}!=q^{n(1-n)\over 2}\cdot [n]_q!,$
the lemma is proven.
\end{proof}

The coproduct formula  (\ref{mon2}) follows from the proven lemmas in the following way.
$$
(\Omega \otimes \Omega)\Delta^b(\{ x_2^nx_1\})
=\Delta^b (\Omega(\{ x_2^nx_1\}))
=p_{12}^{-n}q^{n(1-n)\over 2} \Delta^b ((x_1x_2^n))
$$
$$
=p_{12}^{-n}q^{n(1-n)\over 2}\left( 1\underline{\otimes } (x_1x_2^n)+\sum _{k=0}^n (x_1x_2^{n-k})\underline{\otimes } (x_2^{k})\right)
$$
$$
= 1 \underline{\otimes } \Omega \{ x_2^{n}x_1\}
+p_{12}^{-n}q^{n(1-n)\over 2} \sum _{k=0}^n p_{12}^{n-k} q^{(n-k)(n-k-1)\over 2}  q^{k(k-1)\over 2}
\Omega \{ x_2^{n-k}x_1\} \underline{\otimes }\Omega  \{ x_2^k\} =
$$
$$
=(\Omega \otimes \Omega)     \left(1\underline{\otimes }  \{ x_2^{n}x_1\} 
+\sum _{k=0}^n p_{12}^{-k} q^{k(k-n)}
\{ x_2^{n-k}x_1\} \underline{\otimes } \{ x_2^k\} \right) .
$$
Considering that $p_{ij}$ are algebraically independent parameters, we have
$$
\Delta^b(\{ x_2^nx_1\})=1\underline{\otimes }  \{ x_2^{n}x_1\} +\sum _{k=0}^n p_{12}^{-k} \cdot q^{k(k-n)}\cdot 
\{ x_2^{n-k}x_1\} \underline{\otimes } \{ x_2^k\}.
$$
Due to (\ref{copro}), we may write $u^{(1)}=u_b^{(1)}{\rm gr} (u^{(2)})$ and $u^{(2)}=u_b^{(2)}.$
We have 
${\rm gr} (\{ x_2^k\})=g_2^{k},$ and $\{ x_2^{n-k}x_1\}g_2^{k}= p_{12}^{k} \cdot q^{k(n-k)}
g_2^{k}\{ x_2^{n-k}x_1\}.$ Hence the component $u^{(1)}\otimes u^{(2)}$
is precisely $g_2^k\{ x_2^{n-k}x_1\} \otimes  \{ x_2^k\}.$ Of course, if the proven formula is valid for 
free parameters, it remans valid for arbitrary parameters provided that  $\{ x_2^k\} $  and $\{ x_2^{n-k}x_1\} ,$
$0\leq k\leq n$ are defined.
\begin{lemma}
If $p_{11}=q^3,$ $p_{22}=q,$ $p_{12}p_{21}=q^{-3}\neq 1,$ 
then in the shuffle algebra the following decomposition is valid:
$$
(x_1x_2^3x_1)=p_{12}{q^2+q\over 1-q^3} \cdot (x_2x_1)(x_2^2x_1)
+q^2p_{12}^{2} {[4]_q-2\over 1-q^3} \cdot (x_2^2x_1)(x_2x_1)+q^3p_{12}^3\cdot (x_2^3x_1)(x_1).
$$
\label{leq}
\end{lemma}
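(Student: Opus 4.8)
The plan is to expand the three shuffle products on the right-hand side by means of the product formula (\ref{prow}) and its one-letter specialisation (\ref{spro}), and then to match the resulting coefficients against those of the single comonomial $(x_1x_2^3x_1)$.

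A single structural observation shortens the work considerably. Each of the factors $(x_2x_1)$, $(x_2^2x_1)$, $(x_2^3x_1)$ and $(x_1)$ is a word ending in $x_1$; since in a shuffle product the last letter of every resulting comonomial is the last letter of one of the two factors, every comonomial that occurs on the right-hand side ends in $x_1$. Of the ten comonomials of multidegree two in $x_1$ and three in $x_2$ only four carry $x_1$ in the last slot, namely
\[
w_0=(x_1x_2^3x_1),\qquad w_1=(x_2x_1x_2^2x_1),\qquad w_2=(x_2^2x_1x_2x_1),\qquad w_3=(x_2^3x_1^2).
\]
In addition, both factors of each of the first two products begin with $x_2$, so those products cannot contribute the comonomial $w_0$; the latter is produced only by the third product, and there only by the term of (\ref{spro}) that inserts the trailing $x_1$ at the very front.

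I would first evaluate the third product from (\ref{spro}): inserting $x_1$ into $x_2^3x_1$ in its five positions and using $p(x_1,x_1)=p_{11}=q^3$, one finds, after multiplying by the prescribed scalar $q^3p_{12}^3$,
\[
q^3p_{12}^3\,(x_2^3x_1)(x_1)=w_0+p_{12}\,w_1+p_{12}^2\,w_2+p_{12}^3(1+q^3)\,w_3 .
\]
Thus the coefficient of $w_0$ is already $1$, as required, and it remains to handle $w_1,w_2,w_3$. For this I would expand the length-two by length-three products $(x_2x_1)(x_2^2x_1)$ and $(x_2^2x_1)(x_2x_1)$ from (\ref{prow}), keep only the interleavings ending in $x_1$, and reduce every braiding weight to a scalar multiple of a power of $p_{12}$ by means of $p_{22}=q$, $p_{11}=q^3$ and the hypothesis $p_{12}p_{21}=q^{-3}$ (equivalently $p_{21}^{-1}=p_{12}q^3$).

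The core of the proof is the resulting cancellation. Collecting the coefficients of $w_1$, $w_2$ and $w_3$ gives three linear conditions on the two free scalars multiplying the first two products; the conditions coming from $w_2$ and $w_3$ coincide once the common factor $1+q^3$ is removed, so that only two independent conditions remain, and these pin down the two scalars. The only non-mechanical steps are the factorisations $[4]_q-2=q^3+q^2+q-1$ and $1-q^3=(1-q)(1+q+q^2)$, which are exactly what is needed to pair the geometric sums $1+q^{-1}+q^{-2}$ produced by the shuffle expansions with the denominator $1-q^3$. I expect the $w_3$-coefficient, which collects six interleavings from each of the two products and carries the extra factor $1+q^3$, to be the most delicate to assemble; its vanishing is the main obstacle, and once the three cancellations are verified the identity follows.
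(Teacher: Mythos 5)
Your proposal is correct and follows essentially the same route as the paper: expand the three shuffle products over the comonomials $(x_1x_2^3x_1)$, $(x_2x_1x_2^2x_1)$, $(x_2^2x_1x_2x_1)$, $(x_2^3x_1^2)$, observe that the conditions from the last two comonomials are proportional (the paper phrases this as two proportional columns making a $3\times 3$ determinant vanish), and solve the remaining $2\times 2$ system for the two coefficients. Your preliminary reductions (every resulting comonomial ends in $x_1$, and only $(x_2^3x_1)(x_1)$ can produce $(x_1x_2^3x_1)$) are sound and merely streamline the bookkeeping the paper carries out in full.
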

\begin{proof}
We have
\begin{align*}
(x_2x_1)(\underline{x_2x_2x_1})&=\\
(x_2x_1\underline{x_2x_2x_1})&+
p_{21}^{-1}(x_2\underline{x_2}x_1\underline{x_2x_1})
+p_{21}^{-2}(x_2\underline{x_2x_2}x_1\underline{x_1})
+p_{21}^{-2}p_{11}^{-1}(x_2\underline{x_2x_2x_1}x_1)\\
+p_{21}^{-1}p_{22}^{-1}&(\underline{x_2}x_2x_1\underline{x_2x_1})
+p_{21}^{-2}p_{22}^{-1}(\underline{x_2}x_2\underline{x_2}x_1\underline{x_1})
+p_{21}^{-1}p_{11}^{-1}p_{22}^{-1}(\underline{x_2}x_2\underline{x_2x_1}x_1)\\
+p_{21}^{-2}p_{22}^{-2}&(\underline{x_2x_2}x_2x_1\underline{x_1})
+p_{21}^{-2}p_{11}^{-1}p_{22}^{-2}(\underline{x_2x_2}x_2\underline{x_1}x_1)\\
+p_{21}^{-2}p_{11}^{-1}&p_{22}^{-2}p_{12}^{-1}(\underline{x_2x_2x_1}x_2x_1)\\
&=(x_2x_1x_2^2x_1)+p_{21}^{-1}(1+p_{22}^{-1}+p_{22}^{-2})(x_2^2x_1x_2x_1)\\
&\quad\quad\quad\quad\quad\quad\quad\quad\quad+p_{21}^{-2}(1+p_{22}^{-1}+p_{22}^{-2})(1+p_{11}^{-1})(x_2^3x_1^2).
\end{align*}
\begin{align*}
(\underline{x_2x_2x_1})(x_2x_1)&=\\
(\underline{x_2x_2x_1}x_2x_1)+&p_{21}^{-1}(\underline{x_2x_2}x_2\underline{x_1}x_1)
+p_{21}^{-1}p_{22}^{-1}(\underline{x_2}x_2\underline{x_2x_1}x_1)
+p_{21}^{-1}p_{22}^{-2}(x_2\underline{x_2x_2x_1}x_1)\\
+p_{21}^{-1}p_{11}^{-1}(\underline{x_2x_2}&x_2x_1\underline{x_1})
+p_{21}^{-1}p_{22}^{-1}p_{11}^{-1}(\underline{x_2}x_2\underline{x_2}x_1\underline{x_1})
+p_{21}^{-1}p_{22}^{-2}p_{11}^{-1}(x_2\underline{x_2x_2}x_1\underline{x_1})\\
+p_{21}^{-1}p_{22}^{-1}&p_{11}^{-1}p_{12}^{-1}(\underline{x_2}x_2x_1\underline{x_2x_1})
+p_{21}^{-1}p_{22}^{-2}p_{11}^{-1}p_{12}^{-1}(x_2\underline{x_2}x_1\underline{x_2x_1})\\
+p_{21}^{-1}p_{22}^{-1}&p_{11}^{-1}p_{12}^{-2}(x_2x_1\underline{x_2x_2x_1})\\
&=p_{12}^{-1}p_{22}^{-2}(x_2x_1x_2^2x_1)+(1+p_{22}^{-1}+p_{22}^{-2})(x_2^2x_1x_2x_1)\\
&\quad\quad\quad\quad\quad\quad\quad\quad\quad+p_{21}^{-1}(1+p_{22}^{-1}+p_{22}^{-2})(1+p_{11}^{-1})(x_2^3x_1^2).
\end{align*}
\begin{align*}
(x_2x_2x_2x_1)(x_1)=&p_{11}^{-1}p_{12}^{-2}(x_2x_1x_2^2x_1)+p_{11}^{-1}p_{12}^{-1}(x_2^2x_1x_2x_1)\\
&\quad\quad\quad\quad\quad\quad\quad\quad\quad+(1+p_{11}^{-1})(x_2^3x_1^2)+p_{11}^{-1}p_{12}^{-3}(x_1x_2^3x_1).
\end{align*}

Taking into account relations $p_{11}=q^3,$ $p_{22}=q,$ $p_{21}=q^{-3}p_{12}^{-1},$ 
$1+p_{22}^{-1}+p_{22}^{-2}=q^{-2}[3]_q,$ we obtain 
$$
(x_2x_1)(x_2^2x_1)=\ \ \ \ \ \ \ \ \ (x_2x_1x_2^2x_1)+ q[3]_qp_{12}(x_2^2x_1x_2x_1)+ \ (q^3+1)q[3]_qp_{12}^2(x_2^3x_1^2)
$$
$$
(x_2^2x_1)(x_2x_1)=q^{-2}p_{12}^{-1}(x_2x_1x_2^2x_1)+  q^{-2}[3]_q(x_2^2x_1x_2x_1)+
(q^3+1)q^{-2}[3]_qp_{12}(x_2^3x_1^2) 
$$
$
(x_2^3x_1)(x_1)-q^{-3}p_{12}^{-3}(x_1x_2^3x_1)=
$
$$\ \ \ \ \ \ \ \ \ \ \ \ \ \ \ =\ q^{-3}p_{12}^{-2}(x_2x_1x_2^2x_1) +  
q^{-3}p_{12}^{-1}(x_2^2x_1x_2x_1) + \ \ \ \ \ (q^3+1)q^{-3}(x_2^3x_1^2).
$$
The determinant of the $3\times 3$ matrix of the  right-hand side coefficients is zero
because the last two columns are proportional with respect to the factor $(q^3+1)p_{12}.$
Therefore, the shuffles from the left-hand side are linearly dependent.
In which case the minors 
$$
\left|
\begin{matrix} 1 & q[3]_qp_{12} \cr 
q^{-2}p_{12}^{-1} &q^{-2}[3]_q
\end{matrix}
\right|= (q^{-2}-q^{-1})[3]_q; \ \ 
\left|
\begin{matrix} q^{-2}p_{12}^{-1} & q^{-2}[3]_q \cr 
q^{-3}p_{12}^{-2} & q^{-3}p_{12}^{-1}
\end{matrix}
\right|=-q^{-4}(1+q)p_{12}^{-2}
$$
are nonzero; that is, the first two shuffles are linearly independent as well as the second one and the last one are.
Thus, the last left-hand side shuffle is a linear combination of the first and second ones.
 Of course, it is easy to find the explicit values of the coefficients $\alpha $ and $\beta $ resolving
the system of equations 
$$
\left\{\begin{matrix} \alpha & + & q^{-2}p_{12}^{-1} \beta & = & q^{-3}p_{12}^{-2} \cr
q[3]_qp_{12} \alpha  & + & q^{-2}[3]_q \beta & = & q^{-3}p_{12}^{-1},
\end{matrix}
\right.
$$
$$
\alpha = -q^{-2}p_{12}^{-2}{q+1\over 1-q^3};  \ \  \ \ \beta =q^{-1}p_{12}^{-1} {2-[4]_q\over 1-q^3}.
$$
To find the coefficients of the required decomposition, it remains to multiply $\alpha $ and $\beta $ by $-q^{3}p_{12}^3.$
\end{proof}

\section{Coproduct formula}
By definition the quantum Borel algebra $U_q^+({\mathfrak g})$ related to the simple Lie algebra of type $G_2$
is a homomorphic image of $G\langle x_1, x_2\rangle $ subject to quantum Serre relations 
$$
[x_1x_2^4]=0,\ \ \ [x_1^2x_2]=0
$$
provided that the parameters of quantization satisfy
 $$
p_{11}=p_{12}^{-1}p_{21}^{-1}=p_{22}^3\neq 1, \ \ p_{22}\neq -1.
$$
Of course, this is  a two-parameter family of Hopf algebras.  As above, we put $q=p_{22}.$
Coproduct formulas  (\ref{2coSer}) with $n=4$ and (\ref{2coSer4}) with $n=2$ imply that the defining relations are skew-primitive
polynomials in $G\langle x_1, x_2\rangle .$ Therefore $U_q^+({\mathfrak g})$ keeps the Hopf algebra structure.

The subalgebra $A$ of $U_q^+({\mathfrak g})$ generated by $x_1,$ $x_2$ has a structure of braided
Hopf algebra if we define the braided coproduct by the same relation (\ref{copro}). Equation (\ref{bic1})
with $n=4$ and equation (\ref{bic2}) with $n=2$ show that $\Omega ([x_1x_2^4])=0$ and $\Omega ([x_1^2x_2])=0.$
Therefore the homomorphism $\Omega $ induces a homomorphism of braided Hopf algebras
$$ 
\bar\Omega : A \rightarrow Sh(W).
$$
It is well-known that if $q$ is not a root of unity, then this is an isomorphism.
We define a new element of $A$ as follows:
$$
\{ x_1x_2^3x_1\} \stackrel{df}{=} p_{21}^2{q^3+q^2\over 1-q^3}\{ x_1x_2\} \{ x_1x_2^2\}
+p_{21}{[4]_q-2\over 1-q^3}\{ x_1x_2^2\} \{ x_1x_2\}+\{ x_1x_2^3\}x_1.
$$
\begin{proposition} The ordered set 
$$
x_2<\{ x_1x_2^3\}<\{ x_1x_2^2\} < \{ x_1x_2^3x_1\}<\{ x_1x_2\}<x_1
$$
forms a set of PBW generators of the algebra $U_q^+({\frak g})$ over $G.$
\label{tion}
\end{proposition}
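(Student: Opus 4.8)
The plan is to push the statement through the shuffle homomorphism $\bar\Omega$ and there compare our six elements with the standard Lyndon PBW generators of the Nichols algebra $\mathcal B(W)=\bar\Omega(A)$. First I would record the $\Omega$-images. By Lemmas \ref{exm} and \ref{kmm} the five elements $x_2,\{x_1x_2^3\},\{x_1x_2^2\},\{x_1x_2\},x_1$ are sent to nonzero scalar multiples of the single comonomials $(x_2),(x_2^3x_1),(x_2^2x_1),(x_2x_1),(x_1)$. Applying the algebra homomorphism $\Omega$ to the definition of $\{x_1x_2^3x_1\}$ and substituting these images, the three resulting shuffle products recombine---by precisely the coefficients of Lemma \ref{leq}---into a single comonomial, giving $\Omega(\{x_1x_2^3x_1\})=q^{3}(x_1x_2^3x_1)$. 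Hence each of the six generators maps to a nonzero multiple of one comonomial.

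Next I would bring in the known PBW basis. For $q$ not a root of unity $\bar\Omega$ is an isomorphism onto $\mathcal B(W)$, and Kharchenko's theory supplies a PBW basis whose generators are the super-letters $[u]$ attached to the six Lyndon words of the positive roots. Ordered by decreasing value of the associated Lyndon word (for $x_1<x_2$), these read $x_2<[x_1x_2^3]<[x_1x_2^2]<[[x_1x_2],[x_1x_2^2]]<[x_1x_2]<x_1$, matching the order in the statement slot by slot. For five of the roots the super-letter agrees, up to a scalar, with our generator; only for $2\alpha_1+3\alpha_2$, whose Lyndon word is $x_1x_2x_1x_2^2$ and whose super-letter is the commutator $[[x_1x_2],[x_1x_2^2]]$, do we instead use $\{x_1x_2^3x_1\}$.

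The heart of the argument is to justify this single replacement. Keeping the other five generators fixed, the swap is admissible exactly when $\{x_1x_2^3x_1\}$, expanded in the standard PBW basis of the bidegree-$(2,3)$ component, carries a nonzero coefficient on the super-letter $[[x_1x_2],[x_1x_2^2]]$; equivalently, when $\{x_1x_2^3x_1\}$ is not a combination of ordered products of the remaining five generators. This is what the definition, read together with Lemma \ref{leq}, guarantees: the coefficient of $\{x_1x_2\}\{x_1x_2^2\}$ in the definition is nonzero and this product is out of PBW order, so its straightening contributes a nonzero multiple of $[[x_1x_2],[x_1x_2^2]]$, while the remaining terms $\{x_1x_2^2\}\{x_1x_2\}$ and $\{x_1x_2^3\}x_1$ are already ordered. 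Thus on the bidegree-$(2,3)$ component the passage from the standard generating set to ours is triangular with nonzero diagonal.

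Finally I would assemble the basis property. The relation just described, solved for the out-of-order product $\{x_1x_2\}\{x_1x_2^2\}$, is the one genuinely new straightening rule; together with the standard commutation rules among the five super-letters (and the Serre relations $[x_1x_2^4]=0$, $[x_1^2x_2]=0$ bounding them) it lets one reorder any product, so the ordered monomials span $A$. Since the six generators have bidegrees equal to the six positive roots, the number of ordered monomials in each bidegree equals the corresponding dimension of $\mathcal B(W)$, given generically by $\prod_{\alpha>0}(1-t^\alpha)^{-1}$; spanning then forces linear independence, hence a PBW basis. Adjoining $G$ yields the statement over $G$, and the specialization argument used after Proposition \ref{2coSer}---all transition coefficients being rational in $q,p_{ij}$ and regular wherever the generators are defined---extends it to every admissible parameter, with heights inherited from the standard generators. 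The main obstacle is the replacement step: because the leading comonomial $(x_1x_2^3x_1)$ of $\{x_1x_2^3x_1\}$ is not the Lyndon word of its root, one cannot read off its super-letter component directly, and it is the exact coefficients of Lemma \ref{leq}---reflecting that the new generator is born from the out-of-order product of the $\alpha_1+\alpha_2$ and $\alpha_1+2\alpha_2$ generators---that make the argument close.
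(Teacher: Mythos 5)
Your proposal is correct and its decisive step is exactly the paper's: starting from the known PBW generators $x_2<[x_1x_2^3]<[x_1x_2^2]<[[x_1x_2],[x_1x_2^2]]<[x_1x_2]<x_1$ of \cite{Pog}, \cite{Ang}, one observes that in the definition of $\{x_1x_2^3x_1\}$ only the product $\{x_1x_2\}\{x_1x_2^2\}$ is out of order, and its nonzero coefficient $\alpha=p_{21}^2(q^3+q^2)/(1-q^3)$ makes the expansion triangular with leading term $\alpha[\{x_1x_2\},\{x_1x_2^2\}]$, permitting the replacement. The surrounding apparatus you add (the shuffle image $q^3(x_1x_2^3x_1)$, the Hilbert-series count) is not needed for this proposition --- the paper reserves the shuffle computation for Theorem \ref{c5} and gets spanning and independence for free from the citation --- but it does no harm.
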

\begin{proof} 
In \cite{Pog} and independently in \cite{Ang}, it is shown that the ordered set 
$$
x_2<[x_1x_2^3]<[x_1x_2^2]< [[x_1x_2] ,[x_1x_2^2]]<[x_1x_2]<x_1
$$
forms a set of PBW generators of the algebra $U_q^+({\frak g})$ over $G.$
Of course, this implies that
$$
x_2<\{ x_1x_2^3\}<\{ x_1x_2^2\} <[\{ x_1x_2\} ,\{ x_1x_2^2\}]<\{ x_1x_2\}<x_1
$$
is also a set of PBW generators. By definition the element $\{ x_1x_2^3x_1\}$ has the form 
$$
\{ x_1x_2^3x_1\} = \alpha \{ x_1x_2\} \{ x_1x_2^2\}
+\beta \{ x_1x_2^2\} \{ x_1x_2\}+\{ x_1x_2^3\}x_1,
$$
where $\alpha \neq 0.$ Using evident formula $uv=[u,v]+p(u,v)vu,$ we obtain 
the decomposition of $\{ x_1x_2^3x_1\}$ in the above PBW basis:
$$
\alpha [\{ x_1x_2\} ,\{ x_1x_2^2\}] 
+\gamma \{ x_1x_2^2\} \{ x_1x_2\}+\{ x_1x_2^3\}x_1.
$$ In this decomposition $[\{ x_1x_2\} ,\{ x_1x_2^2\}]$ is the leading therm. Since $\alpha \neq 0,$
it follows that in the set of PBW generators we may replace $[\{ x_1x_2\} ,\{ x_1x_2^2\}]$
with $\{ x_1x_2^3x_1\}.$
\end{proof}

\begin{theorem} The following coproduct formula is valid
$$
\Delta (\{ x_1x_2^3x_1\})=\{ x_1x_2^3x_1\} \otimes 1 +g_1^2g_2^3\otimes \{ x_1x_2^3x_1\} 
+\sum _{k=0}^3 g_1g_2^k\,  \{x_2^{3-k}x_1\} \otimes  \{x_1x_2^k\} .
$$
\label{c5}
\end{theorem}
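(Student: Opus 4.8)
The plan is to mimic the shuffle-algebra derivation of formula (\ref{mon2}) rather than to expand $\Delta$ of the three summands in the definition of $\{x_1x_2^3x_1\}$ by brute force. Everything rests on first identifying the image of $\{x_1x_2^3x_1\}$ under $\Omega$ inside $Sh(W)$. Since $\Omega$ is an algebra homomorphism into the shuffle algebra, I would apply it to the defining expression and evaluate each factor with Lemma \ref{kmm}, writing $\Omega(\{x_1x_2\})=p_{21}^{-1}(x_2x_1)$, $\Omega(\{x_1x_2^2\})=p_{21}^{-2}q^{-1}(x_2^2x_1)$, $\Omega(\{x_1x_2^3\})=p_{21}^{-3}q^{-3}(x_2^3x_1)$ and $\Omega(x_1)=(x_1)$. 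This turns $\Omega(\{x_1x_2^3x_1\})$ into a linear combination of the three shuffle products $(x_2x_1)(x_2^2x_1)$, $(x_2^2x_1)(x_2x_1)$ and $(x_2^3x_1)(x_1)$. Rewriting the powers of $p_{21}$ via $p_{12}p_{21}=q^{-3}$, I expect every coefficient to come out as exactly $q^3$ times the corresponding coefficient in Lemma \ref{leq}, so the combination collapses and
$$
\Omega(\{x_1x_2^3x_1\})=q^3\,(x_1x_2^3x_1).
$$
This is the heart of the argument, and the reason the peculiar scalars in the definition of $\{x_1x_2^3x_1\}$ were chosen: $\Omega$ sends it to a scalar multiple of a \emph{single} comonomial.

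Next I would compute the braided coproduct on the shuffle side. Applying the deconcatenation rule (\ref{bcopro}) to the word $x_1x_2^3x_1$ gives
$$
\Delta^b\big(\Omega(\{x_1x_2^3x_1\})\big)=q^3\sum_{u_1u_2=x_1x_2^3x_1}(u_1)\otimes(u_2),
$$
a sum of six terms indexed by the prefixes $\emptyset,\,x_1,\,x_1x_2,\,x_1x_2^2,\,x_1x_2^3,\,x_1x_2^3x_1$. I would then rewrite each comonomial $(u_1)$ and $(u_2)$ as a scalar multiple of $\Omega$ of the appropriate generator by inverting Lemma \ref{exm} and Lemma \ref{kmm}. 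Because $\Omega$ is injective for algebraically independent parameters, this reads off $\Delta^b(\{x_1x_2^3x_1\})$ as an explicit combination of the tensors $\{x_2^{3-k}x_1\}\otimes\{x_1x_2^k\}$ together with the two boundary terms $\{x_1x_2^3x_1\}\otimes 1$ and $1\otimes\{x_1x_2^3x_1\}$, exactly as in the treatment of (\ref{mon2}).

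Finally I would pass from the braided to the ordinary coproduct through (\ref{copro}), multiplying the left tensor factor of each term by $\mathrm{gr}$ of its right factor and commuting the resulting group-like element to the left. Here the commutation scalars, combined with the coefficients accumulated in the previous step, should simplify to $1$ once $p_{11}=q^3$, $p_{22}=q$ and $p_{12}p_{21}=q^{-3}$ are substituted; the boundary prefix $\emptyset$ produces $g_1^2g_2^3\otimes\{x_1x_2^3x_1\}$ (since $\mathrm{gr}(\{x_1x_2^3x_1\})=g_1^2g_2^3$), giving precisely the stated formula. The usual specialization argument then removes the genericity assumption on the parameters.

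The main obstacle is the first step: recognizing that $\Omega(\{x_1x_2^3x_1\})$ is a scalar multiple of the single comonomial $(x_1x_2^3x_1)$. This is exactly what Lemma \ref{leq} is engineered to supply, and it is where any error in the defining coefficients would prevent the collapse. Everything after that is the routine deconcatenate-and-reassemble bookkeeping already rehearsed for (\ref{mon2}).
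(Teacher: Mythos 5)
Your proposal follows essentially the same route as the paper's own proof: compute $\bar\Omega(\{x_1x_2^3x_1\})$ via Lemma \ref{kmm} and Lemma \ref{leq} to get $q^3(x_1x_2^3x_1)$, deconcatenate with $\Delta^b$, invert $\Omega$ on each tensor factor, and pass to the ordinary coproduct through (\ref{copro}) with the scalar check $\tau_k\mu_k=1$. The outline is correct and identifies the same key collapse to a single comonomial as the crux.
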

\begin{proof} 
By Lemma \ref{kmm}, we have 
$$
\bar\Omega (\{ x_1x_2\} )=p_{21}^{-1}(x_2x_1), \ \ 
\bar\Omega (\{ x_1x_2^2\} )=p_{21}^{-2}q^{-1}(x_2^2x_1), \ \ 
\bar\Omega (\{ x_1x_2^3\} )=p_{21}^{-3}q^{-3}(x_2^3x_1).
$$
Using these equalities and Lemma \ref{leq}, we obtain 
$$
\bar\Omega(\{ x_1x_2^3x_1\} )=
p_{21}^2{q^3+q^2\over 1-q^3}\cdot p_{21}^{-1} \cdot p_{21}^{-2}q^{-1} (x_2x_1)(x_2^2x_1)
$$
$$
+p_{21}{[4]_q-2\over 1-q^3} \cdot p_{21}^{-1} \cdot p_{21}^{-2}q^{-1} (x_2^2x_1)(x_2x_1)+
p_{21}^{-3}q^{-3} (x_2^3x_1)(x_1)
=q^3(x_1x_2^3x_1)
$$
because $p_{21}^{-1}q^{-1}=q^3\cdot p_{12},$ $p_{21}^{-2}q^{-1}=q^3\cdot q^2p_{12}^2,$
and $p_{21}^{-3}q^{-3}=q^3\cdot q^3p_{12}^3.$
We have,
$$
(\bar\Omega \underline{\otimes } \bar\Omega)\Delta^b(\{ x_1x_2^3x_1\})=
\Delta^b (\bar\Omega(\{ x_1x_2^3x_1\} ))
=\Delta^b (q^3(x_1x_2^3x_1))
$$
$$
=q^3(x_1x_2^3x_1)\underline{\otimes }1+1\underline{\otimes }q^3(x_1x_2^3x_1)
+\sum _{k=0}^3 q^3(x_1x_2^{3-k}) \otimes  (x_2^kx_1).
$$
 Lemma \ref{kmm} implies 
$$
(x_1x_2^{3-k})=p_{12}^{3-k}q^{(3-k)(3-k-1)\over 2}\bar\Omega( \{x_2^{3-k}x_1\}), \ \ 
(x_2^kx_1)=p_{21}^kq^{k(k-1)\over 2}\bar\Omega( \{x_1x_2^k\}).
$$ 
By this reason, the tensor under the sum equals
$$
p_{12}^{3-k}p_{21}^kq^{6-3k+k^2}\cdot  \bar\Omega \underline{\otimes }\bar\Omega
 (\{x_2^{3-k}x_1\} \underline{\otimes }  \{x_1x_2^k\}).
$$
If $q$ is a free parameter, then $\bar\Omega $ is an isomorphism, and in $A,$ we have
$$
\Delta^b (\{ x_1x_2^3x_1\})=\{ x_1x_2^3x_1\} \otimes 1 +1\otimes \{ x_1x_2^3x_1\} 
+\sum _{k=0}^3 \tau_k g_1g_2^k\,  \{x_2^{3-k}x_1\} \otimes  \{x_1x_2^k\} ,
$$
where $\tau_k=p_{12}^{3-k}p_{21}^kq^{6-3k+k^2}.$
Due to (\ref{copro}), we may write $u^{(1)}=u_b^{(1)}{\rm gr} (u^{(2)})$ and $u^{(2)}=u_b^{(2)}.$
Since
${\rm gr} ( \{x_1x_2^k\})=g_1g_2^{k},$ and 
$$
\{x_2^{3-k}x_1\}g_1g_2^{k}= \mu_k \cdot g_1g_2^{k}\{x_2^{3-k}x_1\}
$$
with $\mu_k=p_{11}p_{12}^kp_{21}^{3-k}p_{22}^{(3-k)k},$ 
it follows that the component $u^{(1)}\otimes u^{(2)}$
equals $\tau_k\mu_k g_1g_2^k\,  \{x_2^{3-k}x_1\} \otimes  \{x_1x_2^k\} .$
It remains to check that $\tau_k\mu_k=1$:
$$
p_{12}^{3-k}p_{21}^kq^{6-3k+k^2}\cdot q^3p_{12}^kp_{21}^{3-k}q^{(3-k)k}
=q^9\cdot (p_{12}p_{21})^{3-k} (p_{21}p_{12})^k=q^9 q^{-3(3-k)}q^{-3k}=1.
$$
\end{proof}

\end{document}